\documentclass{article}
\usepackage[utf8]{inputenc}
\usepackage{algorithm2e}
\usepackage{mathtools}
\usepackage{amsthm}
\usepackage{cite}
\usepackage{comment}
\usepackage{authblk}
\usepackage{todonotes}
\usepackage{hyperref}
\usepackage[toc,page]{appendix}
\usepackage[dvipsnames]{xcolor}
\usepackage{amssymb}
\usepackage{newpxtext}

\newtheorem{lemma}{Lemma}
\newtheorem{theorem}{Theorem}
\newtheorem{proposition}{Proposition}

\begin{document}
\title{Almost balanced ordered biclique covering of graphs}
\author{Anand Babu, Ervin Ranjan, Maddipati Deshith Sai, Jatla Naga Sidhartha, Anagh Indu Suresh, Sreedhara Vishwas }
\affil{Department of Computer Science {\&} Engineering, \\ National Institute of Technology Calicut, Kozhikode, India}
\date{}
\maketitle
\begin{abstract}
Let $f(n,k)$ be the minimum size of a collection of bicliques such that (i) every edge of the complete graph $K_n$ is covered by at least one and at most $k$ bicliques in the collection, and (ii) for each edge $\{u,v\}$, the number of bicliques in which $u$ appears in the first class and $v$ in the second class differs by at most one from the number of bicliques in which $u$ appears in the second class and $v$ in the first class.

For $k=1$, $f(n,k)$ reduces to the biclique partition number of $K_n$, and the Graham-Pollak theorem gives $f(n,1)=n-1$. For $k=2$, $f(n,k)$ is the ordered biclique partition number of $K_n$, for which it is known that $c_1 n^{1/2} \le f(n,2) \le c_2 n^{1/2+o(1)}$ for some positive constants $c_1$ and $c_2$. In this note, we give almost tight bounds for $f(n,k)$ for fixed $k \ge 2$:
\[
(1+o(1))c_1(k)\cdot n^{\frac{1}{\lceil k/2\rceil+1}}
\le f(n,k)
\le (1+o(1))c_2(k)\cdot n^{\frac{1}{\lfloor k/2\rfloor+1}+o(1)},
\]
where $c_1(k)$ and $c_2(k)$ are positive constants.
\end{abstract}

\section{Introduction}
The \emph{biclique partition number} $bp(G)$ of a graph $G$ is the minimum number of complete bipartite graphs (bicliques) whose edge sets partition $E(G)$. In contrast, the minimum number of bicliques that cover $E(G)$ at least once is the \emph{biclique cover number} of $G$. Hansel \cite{hansel1964nombre} determined the exact biclique cover number of the complete graph, proving that $bc(K_n) = \lceil \log_2 n \rceil$. The celebrated Graham-Pollak theorem \cite{graham1971addressing, graham1972embedding} states that $bp(K_n) = n-1$. The original proof by Graham and Pollak uses Sylvester's law of inertia \cite{graham1972embedding}. Other proofs of the same were found by Tverberg\cite{tverberg1982decomposition},Peck\cite{peck1984new} and Vishwanathan \cite{vishwanathan2008polynomial} using linear algebraic methods. A combinatorial proof was given by Vishwanathan \cite{vishwanathan2013counting}.

Aharoni and Linial\cite{alon1986decomposition} posed the natural extension of this problem on the minimum size of the family of complete $r$-partite $r$-graphs required to partition the edge set of the complete
$r$-uniform hypergraph for $r > 2$. The minimum size of the collection of complete $r$-partite $r$-graphs that partitions the edge set of the complete $r$-uniform hypergraph on $n$ vertices is represented by $f_r(n)$. For $r=2$, $f_r(n)$ represents the minimum size of the biclique partition of $K_n$. For $r=3$, Alon \cite{alon1986decomposition} showed that $f_r(n)=n-2$. For general $r$, constructions due to Alon \cite{alon1986decomposition} and later Cioab\u{a}, K\"undgen, and Verstra\"ete \cite{cioabua2009decompositions} yield an upper bound on $f_r(n)$, while Alon \cite{alon1986decomposition} obtains a linear-algebraic lower bound. Asymptotically,
\(
\frac{2}{\binom{2\lfloor r/2 \rfloor}{\lfloor r/2 \rfloor}}(1+o(1))\binom{n}{\lfloor r/2 \rfloor}
\le f_r(n)
\le (1+ o(1))\binom{n}{\lfloor r/2 \rfloor}.
\) Improvements on the upper bound were provided by Leader, Mili\'cevi\'c, and Tan \cite{leader2017decomposing} and Babu and Vishwanathan \cite{babu2019bounds}. Indeed, it is shown in \cite{leader2018improved} that $f_r(n) \leq \frac{r}{2} (\frac{14}{15})^{r/4} (1+o(1)) \binom{n}{\lfloor r/2 \rfloor}$.

Several natural extensions were also studied. If $L$ is a list of positive integers and $G$ is a graph, the minimum number of bicliques that partition $E(G)$ such that each edge of $G$ is contained in exactly $l$ bicliques for some $l \in L$ is the \emph{$L$-bipartite covering number} of the graph $G$\cite{cioabua2013variations}. It is denoted by $bp_L(G)$. If $L=\{1\}$ then $bp_L(G)$ is same as $bp(G)$. For a list $L=\{1,\cdots,t\}$, $bp_{L}(G)$ is the bipartite covering number of order $t$ or the $t$-biclique covering number of $G$ \cite{alon1997neighborly, huang2012counterexample}. There are not many lists of constant size greater
than one for which the exact value of $bp_L(K_n)$ is known. Further bounds on list covering of graphs and hypergraphs can be found in \cite{babu2021multicovering, babu2022improved}\cite{leader2024odd}\cite{buchanan2024odd, buchanan2023odd}\cite{alon2023new}\cite{radhakrishnan2000depth} \cite{cheng2024exact}\cite{grytczuk2024neighborly} \cite{de1993minimum}.

A variant of list covering of graphs, named ordered biclique partitions was investigated by Shigeta and Amano \cite{Amano2014, ShigetaAmano2015} motivated by non-deterministic communication complexity. This relates graph decompositions with the size of boolean fooling sets. An \emph{ordered biclique partition} of the complete graph $K_n$ is a collection of bicliques in which each edge is covered at least once and at most twice, and any edge covered twice has opposite orientations across the two bicliques: each endpoint lies in the first class in one biclique and in the second class in the other. The minimum number of bicliques in the collection is the ordered biclique partition number denoted by $bp_{1.5}(K_n)$. Shigeta and Amano\cite{ShigetaAmano2015} proves an upper bound of $bp_{1.5}(K_n) \leq c_1 \cdot n^{1/2+o(1)}$ and the lower bound of $bp_{1.5}(K_n) \geq c_2 \cdot n^{1/2}$ is due to Alon\cite{alon1997neighborly}.

Let $[k]=\{1,\cdots,k\}$. An almost balanced ordered biclique covering of order $k$ of a graph $G$ is a collection of bicliques such that each edge $e \in E(G)$ is contained in at least one and at most $k$ bicliques, with the added restriction that if an edge $e=\{u,v\}$ is covered by $t$ bicliques, then the number of bicliques with $u$ in the first class and $v$ in the second class differs by at most one from the number of bicliques with $v$ in the first class and $u$ in the second class. The minimum number of bicliques in the collection is the almost balanced ordered biclique covering number of order $k$ of the graph $G$. It is denoted by $f(n,k)$.

In Section \ref{lower_bound}, the following lower bound of $f(n, k)$ is proved for $k\geq 2$.
\[
f(n,k) \geq (1+o(1)) \cdot  c_1(k) \cdot n^{\displaystyle \frac{1}{\lceil k/2 \rceil+1}}
\]
for a positive constant \(c_1(k)\).

In Section \ref{upper_bound}, we achieve the following upper bound for $f(n, k)$.

\[
f(n,k) \leq (1+o(1))\cdot c_2(k) \cdot n^{\displaystyle \frac{1}{\lfloor k/2 \rfloor+1}+o(1)}
\]
for a positive constant \(c_2(k)\).

\section{Lower Bound}\label{lower_bound}
In this section, we derive a lower bound for $f(n,k)$ using polynomial methods. The proof idea is to define a polynomial for each vertex, evaluate these polynomials on characteristic vectors induced by the color classes of the  almost balanced ordered biclique cover, and then combine linear independence and determine the dimension of the vector space generated by the polynomials.

\begin{theorem}\label{lower_bound_theorem}
For $k \geq 2$, the almost balanced ordered biclique covering of order \(k\)
\[
f(n,k) \geq (1+o(1)) \cdot  c_1(k) \cdot  n^{\displaystyle 1/(\lceil k/2 \rceil+1)}
\]
for a positive constant \(c_1(k)\).
\end{theorem}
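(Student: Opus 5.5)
The plan is to use the polynomial method in the style of Alon's neighborly-family argument. Take an optimal almost balanced ordered biclique covering $(A_1,B_1),\dots,(A_m,B_m)$ of $K_n$ of order $k$, so that $m=f(n,k)$. For each vertex $v$ define two characteristic vectors $a_v,b_v\in\{0,1\}^m$: $a_v(i)=1$ iff $v\in A_i$, and $b_v(i)=1$ iff $v\in B_i$. For distinct $u,v$ put $x_{uv}=\langle a_u,b_v\rangle$ (the number of bicliques with $u$ in the first class and $v$ in the second) and $y_{uv}=\langle a_v,b_u\rangle$. The covering conditions say $1\le x_{uv}+y_{uv}\le k$ and $|x_{uv}-y_{uv}|\le 1$. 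Since $A_i\cap B_i=\emptyset$, we also have $\langle a_v,b_v\rangle=0$ for every $v$. Let $s=\lceil k/2\rceil$.

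\emph{Range of the counts.} From $x+y\le k$ and $|x-y|\le 1$ we get $2\max(x,y)\le k+1$, so $\max(x_{uv},y_{uv})\le s$ and both lie in $\{0,\dots,s\}$. The key structural fact is this: if $x_{uv}=0$, then $x+y\ge 1$ forces $y_{uv}\ge 1$, while $|x-y|\le1$ forces $y_{uv}\le 1$, so $y_{uv}=1$.

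\emph{The polynomials.} For each $v$ define a polynomial in $2m$ variables $z,w\in\mathbb{R}^m$:
\[
Q_v(z,w)=\Bigl(\prod_{j=1}^{s}\bigl(j-\langle z,b_v\rangle\bigr)\Bigr)\cdot\bigl(1-\langle a_v,w\rangle\bigr).
\]
We evaluate at the points $(a_u,b_u)$.
\begin{itemize}
\item For $u=v$ we get $s!\cdot 1\neq 0$.
\item For $u\ne v$ we get $\prod_{j=1}^{s}(j-x_{uv})\cdot(1-y_{uv})$. The first factor vanishes unless $x_{uv}=0$, because $x_{uv}\in\{0,\dots,s\}$. If $x_{uv}=0$, then $y_{uv}=1$ by the structural fact, so the second factor vanishes.
\end{itemize}
The evaluation matrix is therefore diagonal with nonzero diagonal, and the standard argument shows that $Q_1,\dots,Q_n$ are linearly independent as functions on $\{0,1\}^{2m}$.

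\emph{Counting dimensions.} On $\{0,1\}^{2m}$ we may replace each $Q_v$ by its multilinear reduction. This lies in the span of monomials that have degree at most $s$ in the $z$-variables and degree at most $1$ in the $w$-variables. That space has dimension at most $(m+1)\sum_{i\le s}\binom{m}{i}=(1+o(1))\,m^{s+1}/s!$ for fixed $k$. Hence
\[
n\le (1+o(1))\frac{m^{s+1}}{s!},
\]
which gives $f(n,k)=m\ge (1+o(1))(s!)^{1/(s+1)}\,n^{1/(\lceil k/2\rceil+1)}$. This is the statement with $c_1(k)=(\lceil k/2\rceil!)^{1/(\lceil k/2\rceil+1)}$.

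\emph{Main obstacle.} The delicate step is choosing the polynomial so that the degree is only $s+1$ and not larger. A naive choice vanishing on $x+y\in[1,k]$ needs degree $k$. A symmetric Hadamard-product rank argument only gives the exponent $1/(2s)$. The asymmetric choice above works because of the balancing condition: $x_{uv}=0$ pins $y_{uv}$ to exactly $1$, so a single linear factor in $w$ suffices. I would double-check this case analysis, in particular that $\max(x,y)\le s$ holds for both parities of $k$, before finalizing the dimension count.
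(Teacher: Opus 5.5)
Your proof is correct and follows essentially the same route as the paper. The paper's polynomial $P_i$ is likewise a product of $\lceil k/2\rceil$ shifted linear forms in one block of variables times a single factor $(\,\cdot\,-1)$ in the other block, up to which class feeds which block; it uses the same observation that $X=0$ forces $Y=1$, then multilinearization and the same monomial count. Your count $(m+1)\sum_{i\le s}\binom{m}{i}=(1+o(1))m^{s+1}/s!$ is a bit tighter than the paper's cruder estimate and yields the explicit constant $c_1(k)=(\lceil k/2\rceil!)^{1/(\lceil k/2\rceil+1)}$.
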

   
\begin{proof}
Let $\mathcal{B}=\{B_1,\dots,B_d\}$ be an almost balanced ordered biclique covering of order $k$ and size $d$ of the complete graph on the vertex set $N=\{1,\dots,n\}$. 
For each $i\in\{1,\dots,d\}$, let $U_i$ and $W_i$ denote the first and second vertex classes, respectively, of $B_i$.

For each $i\in N$, define a polynomial
\(
P_i=P_i(x_1,\dots,x_d,y_1,\dots,y_d)
\)
as follows.

\[
\begin{aligned}[t]
P_i
&= 
\Bigg[
\prod_{r=1}^{\lceil k/2\rceil}
\left(
\sum_{p:\, i\in U_p} x_p-r
\right)
\Bigg]
\left(
\sum_{q:\, i\in W_q} y_q-1
\right).
\end{aligned}
\]
For each $j\in N$, let
\(
e_j=(w_{j1},\ldots,w_{jd},\,u_{j1},\ldots,u_{jd})
\)
be the $0$-$1$ vector in which $u_{jp}=1$ if $j\in U_p$ and $u_{jp}=0$ otherwise,
and $w_{jq}=1$ if $j\in W_q$ and $w_{jq}=0$ otherwise. We show that $P_i(e_j)=0$
when $i\neq j$, and that $P_i(e_i)\neq 0$ for all $i\in N$.

\medskip
\noindent

Fix distinct $i,j\in N$. Evaluate $P_i$ at 
$(x_1,\ldots,x_d,y_1,\ldots,y_d)=e_j$. Define
\(
X=\sum_{p:\, i\in U_p} x_p
\) and \(
Y=\sum_{q:\, i\in W_q} y_q.
\)
Then $X$ is the number of bicliques in which an edge $(i,j)$ appears such that $i$ lies in the first color class and $j$ lies in the second class, and $Y$ is the number of bicliques in which an edge $(i,j)$ appears such that $j$ lies in the first color class and $i$ lies in the second color class. Since $\mathcal{B}$ is an almost balanced ordered biclique covering of order $k$, the balancing condition for the edge $\{i,j\}$ yields $|X-Y|\le 1$. 
Consider the term,
\(
\Bigg[
    \prod_{r=1}^{\lceil k/2\rceil}(X-r)
\Bigg].
\)
If $1\le X\le \lceil k/2\rceil$, then one factor in the product is zero, and the term vanishes. Otherwise $X=0$. In this case, the almost balanced ordered biclique covering condition forces $(X,Y)=(0,1)$, and therefore $Y-1=0$. Therefore, $P_i(e_j)=0$ whenever $i\neq j$. 
\medskip
\noindent

Now fix $i\in N$. Evaluate $P_i$ at $e_i$. Since no biclique contains a vertex $i$ such that it lies in both classes of the same biclique. Hence under $e_i$ we have $X=0$ and $Y=0$ and the term equals
\(
\Bigg[
    \prod_{r=1}^{\lceil k/2\rceil}(-r)
\Bigg](-1)
=
\lceil k/2\rceil!
\neq 0.
\)
Therefore, $P_i(e_i)\neq 0$ for all $i\in N$..

\paragraph{}
Let $\overline{P}_i = \overline{P}_i(x_1,\ldots,x_d, y_1,\ldots,y_d)$ denote the multilinear polynomial derived from the standard expansion of $P_i$ as a sum of monomials, where each monomial of the form
\(
c \prod_{s \in S} x_s^{\delta_s} \prod_{t \in T} y_t^{\gamma_t},
\)
with all $\delta_s$ and $\gamma_t$ positive, is replaced by
\(
c \prod_{s \in S} x_s \prod_{t \in T} y_t.
\)
Note that as all variables $x_p$ and $y_q$ take values in $\{0,1\}$, we have
\(
P_i(x_1,\ldots,y_d) = \overline{P}_i(x_1,\ldots,y_d),
\)
since for any $\delta$, $0^{\delta} = 0$ and $1^{\delta} = 1$. Therefore, 
\(
    \overline{P}_i(e_j) = 0 \hspace{1mm}\text{for all } i \neq j,\hspace{1mm} \text{and} \hspace{1mm} \overline{P}_i(e_i) \neq 0.
\)

\paragraph{}
The polynomials $\overline{P}_i$ $(i \in N)$ are linearly independent. We prove this by contradiction. Let
\(
\sum_{i \in N} c_i \overline{P_i}(x_1,\ldots,y_d) = 0,
\)
for non-trival values of \(c_i\) for \(i \in N\). Then there exists an $l \in N$ such that $c_l \neq 0$. By substituting $(x_1,\ldots,y_d) = e_l$, we get $c_l = 0$, since \(
    \overline{P}_i(e_j) = 0 \hspace{1mm}\text{for all } i \neq j,\hspace{1mm} \text{and} \hspace{1mm} \overline{P}_i(e_i) \neq 0
\), contradiction. Therefore these polynomials are linearly independent.

\paragraph{}

The polynomial does not have monomials that contain both $x_i$ and $y_i$ for the same $i$.
The polynomial is in the space generated by all the monomials $\prod_{s \in S} x_s \prod_{t \in T} y_t$, where $S \subseteq [d]$ such that $0 \leq |S| \leq \lceil k/2\rceil$ and $T \subseteq [d]\setminus S$ such that $|T|\leq 1$. The total number of such pairs is $m= \sum_{i=0}^{\lceil k/2 \rceil} \binom{d}{i}$ +$\sum_{i=1}^{{\lceil k/2 \rceil}+1} \binom{d}{i-1}(d-i+1)$. Since the polynomials $\overline{P_i}$ for $i=1$ to $n$ are linearly independent members in this space it follows that $n \leq m$. Therefore, we have

\begingroup
\allowdisplaybreaks
\begin{align*}
    n &\leq \sum_{i=0}^{\lceil k/2 \rceil} \binom{d}{i}+\sum_{i=1}^{{\lceil k/2 \rceil}+1} \binom{d}{i-1}(d-i+1) \\
    &\leq \sum_{i=0}^{\lceil k/2 \rceil} \binom{d}{i}+\sum_{i=0}^{\lceil k/2 \rceil} \binom{d}{i}(d-i) \\
    &\leq \sum_{i=0}^{\lceil k/2 \rceil} \binom{d}{i}(d-i+1) \\
    &\leq (d+1) \sum_{i=0}^{\lceil k/2 \rceil} \binom{d}{i}\\
    & \leq (d+1)\cdot (1+\lceil k/2\rceil) \cdot \binom{d}{{\lceil k/2 \rceil}} && \text{(use $\sum_{i=0}^{m}\binom{n}{i}\le (m+1)\binom{n}{m}$)}\\
    & \leq (d+1) \cdot (1+\lceil k/2\rceil) \cdot \bigg(\frac{ed}{{\lceil k/2 \rceil}}\bigg)^{\lceil k/2 \rceil} && \text{(use $\binom{n}{r}\le (en/r)^r$)}\\
    & \leq 4\cdot\lceil k/2\rceil \cdot \bigg(\frac{e}{\lceil k/2 \rceil}\bigg)^{\lceil k/2 \rceil} d^{\lceil k/2 \rceil +1 } && \\
\end{align*}
\endgroup

Rearranging the last inequality, we get
\[
d^{\lceil k/2 \rceil+1}
\ge
\frac{n}{4 \cdot \lceil k/2\rceil}
\left(\frac{\lceil k/2\rceil}{e}\right)^{\!\lceil k/2\rceil}.
\]
Hence,
\[
d \ge
\left(\frac{n}{4 \cdot \lceil k/2\rceil}\right)^{\!1/(\lceil k/2\rceil+1)}
\left(\frac{\lceil k/2\rceil}{e}\right)^{\!\lceil k/2\rceil/(\lceil k/2\rceil+1)}.
\]

\end{proof}

\section{Upper Bound}\label{upper_bound}
In this section, we obtain an upper bound for $f(n,k)$ using a constructive argument. The proof idea is to represent vertices by strings that are subcubes of the $d$-dimensional hypercube. Lemma \ref{lemma_eq_coord} provides a large balanced family of binary vectors by fixing the Hamming weight to $\lfloor k/2 \rfloor$. The second lemma then boosts the size of the family by using a product construction that concatenates the balanced family of strings from the Lemma \ref{lemma_eq_coord} with a family of strings coming from the ordered biclique partition of $K_n$ \cite{ShigetaAmano2015}. Then for each coordinate $i\in[d]$, we obtain a biclique $B_i(U_i,W_i)$ by placing all vectors with $i$-th bit $0$ in $U_i$ and vectors with $i$-th bit $1$ in $W_i$. The resulting family of bicliques forms an almost balanced ordered biclique cover of order $k$.

\begin{lemma}\label{lemma_eq_coord}
There exists a set $T \subseteq \{0,1\}^d$ of size at least $\binom{d}{\lfloor k/2 \rfloor}$ such that for every distinct $x,y \in T$, the number of coordinates where $(x_i,y_i)=(0,1)$ equals the number of coordinates where $(x_i,y_i)=(1,0)$, and the number of coordinates in which $x$ and $y$ differ is at least $1$ and at most $k$.
\end{lemma}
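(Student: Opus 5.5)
Take $T$ to be the set of all vectors in $\{0,1\}^d$ of Hamming weight exactly $w=\lfloor k/2\rfloor$. Then $|T|=\binom{d}{\lfloor k/2\rfloor}$ exactly, which gives the size bound. (If $w=0$ then $|T|=1$, and the pairwise conditions hold vacuously. Since $k\ge 2$ in the intended use, $w\ge 1$.) The two pairwise properties then follow from a simple counting argument on the coordinates.

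Fix distinct $x,y\in T$. Let
\[
a=|\{i:(x_i,y_i)=(0,1)\}|,\quad b=|\{i:(x_i,y_i)=(1,0)\}|,\quad c=|\{i:(x_i,y_i)=(1,1)\}|.
\]
Counting ones in each vector gives $w=b+c$ for $x$ and $w=a+c$ for $y$. Subtracting yields $a=b$, which is the balancing condition.

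The number of coordinates where $x$ and $y$ differ is $a+b=2a$. It is at least $1$ because $x\ne y$. For the upper bound, $a\le w-c\le w$, so $a+b\le 2w=2\lfloor k/2\rfloor\le k$.

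No step presents a genuine obstacle. The only points needing care are that the Hamming distance is automatically even, and that $2\lfloor k/2\rfloor\le k$ holds for both parities of $k$. I would also remark that $d\ge \lfloor k/2\rfloor$ is implicitly assumed, so that $T$ is nonempty, although the bound holds trivially otherwise.
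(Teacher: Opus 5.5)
Your proof is correct and matches the paper's: it also takes $T$ to be the vectors of weight $\lfloor k/2\rfloor$, gets equality of the $(0,1)$ and $(1,0)$ counts from the equal weights, and bounds the distance by $2\lfloor k/2\rfloor\le k$. Your explicit count $w=b+c=a+c$ gives a slightly cleaner balancing step than the paper's informal pairing argument.
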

\begin{proof}
    Let $m=\lfloor k/2 \rfloor$, and define
    \[
    T=\left\{x\in\{0,1\}^d:\sum_{i=1}^{d}x_i=m\right\}.
    \]

Consider any two distinct vectors $u,v\in T$. Let $p$ be the number of coordinates where $(u_i,v_i)=(0,1)$, and let $q$ be the number of coordinates where $(u_i,v_i)=(1,0)$. Since $u$ and $v$ both have exactly $m$ ones, every time $u$ has a $1$ and $v$ has a $0$ at some coordinate, there must be another coordinate where $u$ has a $0$ and $v$ has a $1$, since the total number of ones is $m$ in both vectors. Hence $p$ and $q$ are equal.

    Let $t$ be the number of coordinates in which $u$ and $v$ differ. Then
    \(
    t=p+q.
    \)
    Since $u\neq v$, there is at least one coordinate where they differ, so $t\ge 1$ (in fact, here $t\ge 2$ since $p=q$). Moreover, we have $p\le m$ and $q\le m$, and therefore
    \(
    t=p+q\le 2m=2\lfloor k/2\rfloor\le k.
    \)
    The size of $T$ is $\binom{d}{m}$.
\end{proof}

\begin{lemma}\label{product_lemma}
    There exists a set $\mathcal{H} \subseteq \{0,1,*\}^{2d}$ of size at least $c(k) \cdot \bigg(\frac{d^{\lfloor{k/2 \rfloor}+2}}{\log^2 d}\bigg)$ for some positive constant $c(k)$, such that for every distinct $x,y \in \mathcal{H}$, the number of coordinates where $(x_i,y_i)=(0,1)$ differs from the number of coordinates where $(x_i,y_i)=(1,0)$ is at most $1$, and the number of coordinates in which $x$ and $y$ differ is at least $1$ and at most $k$.
\end{lemma}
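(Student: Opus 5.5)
The plan is a product construction, as the section overview announces. Put $m=\lfloor k/2\rfloor$. For the first $d$ coordinates I use the family $T\subseteq\{0,1\}^d$ of Lemma~\ref{lemma_eq_coord}, with $|T|=\binom{d}{m}\ge (d/m)^m$. Any two distinct members of $T$ differ in at most $2m\le k$ coordinates, split evenly between the orientations $(0,1)$ and $(1,0)$. For the last $d$ coordinates I use the ordered biclique partition of Shigeta and Amano \cite{ShigetaAmano2015}. Their bound $bp_{1.5}(K_N)\le c\,N^{1/2+o(1)}$, in the quantitative form of their construction, gives about $c'\,d^2/\log^2 d$ vertices using $d$ bicliques. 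Encoding each vertex $v$ by the string $s_v\in\{0,1,*\}^d$ (entry $0$ if $v$ is in the first class of the $i$-th biclique, $1$ if in the second, $*$ otherwise) gives a family $S$ with $|S|\ge c'd^2/\log^2 d$. Any two distinct members of $S$ conflict in either exactly one coordinate, or exactly two coordinates with opposite orientations. The candidate is $\mathcal H=\{(t,s):t\in T,\ s\in S\}\subseteq\{0,1,*\}^{2d}$, whose size is $\binom{d}{m}\cdot c'd^2/\log^2 d\ge c(k)\,d^{m+2}/\log^2 d$ as required.

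Next I would check the two conditions by cases on a pair $x=(t,s)\ne y=(t',s')$. If $t=t'$, all differences come from the $S$-block: there are one or two of them, and the imbalance is at most $1$. If $s=s'$, all differences come from the $T$-block: they are balanced, and there are between $2$ and $2m\le k$ of them. The imbalance is additive over the two blocks, so in the mixed case $t\ne t'$, $s\ne s'$ the imbalance is $0+(\le 1)\le 1$, which is fine.

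The main obstacle is the total count in the mixed case, which can be as large as $2m+2$. This exceeds $k$ whenever the $T$-pair attains the full distance $2m$. My first attempt at a fix is to couple the two blocks rather than take a free product. Split $[d]$ into $d/2$ pairs of coordinates and let the $S$-string occupy one coordinate of a pair while $t$ is written with a $*$ in that same position. More concretely, I would index the $S$-part by the vertex's position in the Shigeta–Amano structure and replace by $*$ the entries of $t$ on a set $R(s)$ of coordinates reserved for $s$. This arranges that whenever two $S$-strings conflict, the corresponding $T$-strings lose at least as many comparable coordinates, so that the mixed-case distance is at most $2m\le k$. At the same time the star pattern must not destroy the balance within the $T$-block. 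A balanced deletion, removing one $(0,1)$ and one $(1,0)$ conflict together, is what I would aim to guarantee by reserving pairs of coordinates. If this coupling costs only a constant factor in $|T|$ and in $|S|$, the size bound $c(k)\,d^{m+2}/\log^2 d$ survives. I expect verifying this coupling, rather than the counting, to be where the real work lies.
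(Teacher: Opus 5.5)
Your product construction and three-case check reproduce the paper's proof: $\mathcal{F}\cdot\mathcal{G}$ with $\mathcal{F}$ the weight-$\lfloor k/2\rfloor$ vectors and $\mathcal{G}$ the Shigeta--Amano strings, with additivity of $N_{01}-N_{10}$ and of $D$ over the two blocks. The obstacle you flag is real, and the paper does not overcome it either. Its proof ends with $1\le D(r,s)\le 2\lfloor k/2\rfloor+2$, not $\le k$, and Theorem~\ref{upper_bound_theorem} then uses $\mathcal{H}$ as a covering of order $K=2\lfloor k/2\rfloor+2$.

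The gap in your proposal is the coupling step. It is only sketched: you never specify $R(s)$, and starring $t$ on $R(s)$ can merge two strings $(t,s),(t',s)$ whose $T$-parts differ only on $R(s)$, or leave their remaining $T$-conflicts unbalanced. More decisively, it cannot succeed for even $k$. Any $\mathcal{H}$ satisfying the statement gives, via $U_i=\{x:x_i=0\}$ and $W_i=\{x:x_i=1\}$, an almost balanced ordered biclique covering of order $k$ of $K_{|\mathcal{H}|}$ by $2d$ bicliques. The dimension count in the proof of Theorem~\ref{lower_bound_theorem} then forces $|\mathcal{H}|=O_k(d^{\lceil k/2\rceil+1})$. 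For even $k$ this is $O(d^{k/2+1})$, which contradicts $|\mathcal{H}|\ge c(k)\,d^{k/2+2}/\log^2 d$ once $d$ is large. So the statement with ``at most $k$'' is false for even $k$. For odd $k$ it would give $f(n,k)\le n^{1/(\lceil k/2\rceil+1)+o(1)}$, matching the lower bound and closing the gap between the paper's two bounds. A local patch of the product should not be expected to deliver that.

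The right repair is to weaken the conclusion rather than couple the blocks. Your free product with your exact case analysis already proves the lemma with ``at most $2\lfloor k/2\rfloor+2$'' in place of ``at most $k$'', which is what the paper actually establishes. The paper feeds that version into Theorem~\ref{upper_bound_theorem} after re-indexing with $K=2\lfloor k/2\rfloor+2$. Equivalently, you can keep ``at most $k$'' and take $m=\lfloor k/2\rfloor-1$ (which is $\ge 0$ since $k\ge 2$). This gives pairwise conflict count at most $2\lfloor k/2\rfloor\le k$ and $|\mathcal{H}|\ge c(k)\,d^{\lfloor k/2\rfloor+1}/\log^2 d$, hence $f(n,k)\le n^{1/(\lfloor k/2\rfloor+1)+o(1)}$ directly. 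That is all the upper bound theorem needs.
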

\begin{proof}
    For two families $\mathcal{F} \subseteq \mathcal{S}^p$ and $\mathcal{G} \subseteq \mathcal{S}^q$, let $\mathcal{F} \cdot \mathcal{G} = \{uv : u \in \mathcal{F}, v \in \mathcal{G}\}$ be the family of strings in $\mathcal{S}^{p+q}$ consisting of all possible concatenations of strings from $\mathcal{F}$ and $\mathcal{G}$. Clearly, we have $| \mathcal{F} \cdot \mathcal{G}| = |\mathcal{F}| \cdot |\mathcal{G}|$.

    Let $\mathcal{F}\subseteq \{0,1\}^d$ be the family from Lemma \ref{lemma_eq_coord}, and let $\mathcal{G}\subseteq \{0,1,*\}^d$ be the family of strings induced by the ordered biclique partition of $K_n$. From the construction in \cite{ShigetaAmano2015}, we have $|\mathcal{G}| \geq \frac{d^2}{\log^2 d}$. Therefore, we have 
    
    \begin{align*}
        |\mathcal{F} \cdot \mathcal{G}| \geq \binom{d}{\lfloor k/2 \rfloor} \cdot \frac{d^2}{\log^2 d} \geq \bigg(\frac{d}{\lfloor{k/2 \rfloor}}\bigg)^{\lfloor{k/2 \rfloor}}\cdot \frac{d^2}{\log^2 d} \geq \bigg(\frac{1}{\lfloor{k/2 \rfloor}}\bigg)^{\lfloor{k/2 \rfloor}} \cdot \bigg(\frac{d^{\lfloor{k/2 \rfloor}+2}}{\log^2 d}\bigg)
    \end{align*}

Consider two distinct strings $r,s \in \mathcal{F}\cdot \mathcal{G}$ where $r=uv$ and $s=xy$, such that $u,x\in \mathcal{F}$ and $v,y\in \mathcal{G}$. For $a,b\in\{0,1,*\}^{2d}$, define
\[
N_{01}(a,b)=|\{i:(a_i,b_i)=(0,1)\}|,\qquad
N_{10}(a,b)=|\{i:(a_i,b_i)=(1,0)\}|.
\]
Then $N_{01}(r,s)=N_{01}(u,x)+N_{01}(v,y)$ and
$N_{10}(r,s)=N_{10}(u,x)+N_{10}(v,y)$. Therefore, we have $N_{01}(r,s)-N_{10}(r,s)
=\big(N_{01}(u,x)-N_{10}(u,x)\big)+\big(N_{01}(v,y)-N_{10}(v,y)\big)$.
By Lemma \ref{lemma_eq_coord}, for any $u,x\in \mathcal{F}$ we have
\(
N_{01}(u,x)-N_{10}(u,x)=0.
\)
Since the strings in $\mathcal{G}$ are induced by the ordered biclique partition of $K_n$, for any $v,y\in \mathcal{G}$,
\(
\left|N_{01}(v,y)-N_{10}(v,y)\right|\le 1.
\)
Therefore,
\(
\left|N_{01}(r,s)-N_{10}(r,s)\right|\le 1.
\)

Define $D(a,b)=|\{i:\{a_i,b_i\}=\{0,1\}\}|$. It denotes the number of coordinates in which one entry is $0$ and the other is $1$. Then, $D(r,s)=D(u,x)+D(v,y).$

Consider the following cases.

\emph{Case 1: $u=x$ and $v\neq y$.} Then
$D(u,x)=0$ and, since $\mathcal{G}$ is the family of strings induced by the ordered biclique partition of $K_n$,
\(
1\le D(v,y)\le 2,
\)
so
\(
1\le D(r,s)\le 2.
\)

\emph{Case 2: $u\neq x$ and $v=y$.} Then $D(v,y)=0$ and, from Lemma \ref{lemma_eq_coord}, we have
\(
1\le D(u,x)\le 2\lfloor k/2\rfloor,
\)
so
\(
1\le D(r,s)\le 2\lfloor k/2\rfloor.
\)

\emph{Case 3: $u\neq x$ and $v\neq y$.} Then
\(
1\le D(u,x)\le 2\lfloor k/2\rfloor\) and
\(1\le D(v,y)\le 2,
\)
which gives
\(
2\le D(r,s)=D(u,x)+D(v,y)\le 2\lfloor k/2\rfloor+2.
\)

Therefore, we have $\left|N_{01}(r,s)-N_{10}(r,s)\right|\le 1$ and $1\le D(r,s)\le 2\lfloor k/2\rfloor+2$.

\end{proof}

\begin{theorem}\label{upper_bound_theorem}
    The almost balanced ordered biclique covering number of order $k$, 
    \[
    f(n,k) \leq (1+o(1))\cdot c_2(k) \cdot n^{\displaystyle \frac{1}{\lfloor k/2 \rfloor+1}+o(1)}
    \]
for a positive constant $c_2(k)$.
\end{theorem}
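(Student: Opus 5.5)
The plan is to turn a family of strings into bicliques, one biclique per coordinate, and then choose parameters so that every edge is covered at most $k$ times. As stated, Lemma~\ref{product_lemma} only guarantees $D(r,s)\le 2\lfloor k/2\rfloor+2$, which can exceed $k$. So I will not apply it directly. Instead I rerun its product construction with a smaller weight in the first factor.

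\emph{Step 1 (the construction).} Set $m=\lfloor k/2\rfloor-1\ge 0$, valid since $k\ge 2$. Let $\mathcal{F}$ be the family of all vectors of weight $m$ in $\{0,1\}^d$, as in Lemma~\ref{lemma_eq_coord}; when $m=0$ this is just the single zero string. Let $\mathcal{G}\subseteq\{0,1,*\}^d$ be the Shigeta--Amano family induced by the ordered biclique partition, with $|\mathcal{G}|\ge d^2/\log^2 d$. Define $\mathcal{H}=\mathcal{F}\cdot\mathcal{G}$. The case analysis in the proof of Lemma~\ref{product_lemma} goes through verbatim. For distinct $r,s\in\mathcal{H}$ it gives $|N_{01}(r,s)-N_{10}(r,s)|\le 1$ and
\[
1\le D(r,s)\le 2m+2=2\lfloor k/2\rfloor\le k.
\]
For the size, $|\mathcal{H}|\ge \binom{d}{m}\,d^2/\log^2 d\ge c(k)\,d^{\lfloor k/2\rfloor+1}/\log^2 d$.

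\emph{Step 2 (from strings to bicliques).} Given $n$, take the least $d$ with $c(k)\,d^{\lfloor k/2\rfloor+1}/\log^2 d\ge n$, and identify the vertices of $K_n$ with $n$ distinct strings of $\mathcal{H}$. For each coordinate $i\in[2d]$, define the biclique $B_i$ by $U_i=\{x: x_i=0\}$ and $W_i=\{x: x_i=1\}$; strings with a $*$ in position $i$ are omitted. An edge $\{x,y\}$ lies in $B_i$ exactly when $\{x_i,y_i\}=\{0,1\}$. Hence it is covered $D(x,y)$ times, which lies in $[1,k]$. Moreover, the counts for its two orientations are $N_{01}(x,y)$ and $N_{10}(x,y)$, and these differ by at most one. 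So the collection is an almost balanced ordered biclique covering of order $k$ with at most $2d$ bicliques, and $f(n,k)\le 2d$.

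\emph{Step 3 (inverting the bound).} By minimality of $d$,
\[
(d-1)^{\lfloor k/2\rfloor+1}<c(k)^{-1}\,n\log^2 d .
\]
Since $d\le n$, we have $\log^2 d\le \log^2 n=n^{o(1)}$. Therefore
\[
d\le (1+o(1))\,c(k)^{-1/(\lfloor k/2\rfloor+1)}\,n^{1/(\lfloor k/2\rfloor+1)+o(1)},
\]
and multiplying by $2$ yields the claimed bound with $c_2(k)=2c(k)^{-1/(\lfloor k/2\rfloor+1)}$.

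The main obstacle is Step 1. The coverage bound in Lemma~\ref{product_lemma} overshoots $k$, so the weight of $\mathcal{F}$ must drop by one. The two extra powers of $d$ supplied by $\mathcal{G}$ then give exactly the exponent $\lfloor k/2\rfloor+1$. I also need to check that the Shigeta--Amano family really satisfies $1\le D\le 2$ and $|N_{01}-N_{10}|\le 1$ for all distinct pairs. This is what ``induced by an ordered biclique partition'' means, so I take it as given, as the paper does.
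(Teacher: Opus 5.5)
Your proposal is correct and is essentially the paper's proof. The paper builds the same product family $\mathcal{F}\cdot\mathcal{G}$ with weight $\lfloor k/2\rfloor$ and reads it as a covering of order $K=2\lfloor k/2\rfloor+2$ with $2d$ bicliques, then handles odd orders via $f(n,K)\le f(n,K-1)$; this amounts exactly to your uniform choice of weight $\lfloor k/2\rfloor-1$ for target order $k$, and your Step 3 just spells out the paper's ``rescaling'' more carefully.
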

\begin{proof}
    Let $\mathcal{H}$ be the family of \(2d\)-tuple vectors $v = \langle v_1,\cdots,v_{2d} \rangle$ as in Lemma \ref{product_lemma}. Consider the following family of bicliques $\mathcal{B}=\{B_1(U_1,W_1),\cdots, B_{2d}(U_{2d},W_{2d})\}$, where $U_i$ and $W_i$ denote the first and second color classes respectively of the biclique $B_i \in \mathcal{B}$, for $i \in \{1,\cdots,2d\}$ such that $U_i$ contains all vectors $v$ such that $v_i = 0$, $W_i$ contains all vectors $v$ such that $v_i = 1$. All the vectors with $v_i=*$, does not occur in either $U_i$ and $W_i$.

    Consider any two distinct vectors $x,y \in \mathcal{H}$. Suppose an edge $x,y$ appears exactly in $t$ bicliques in $\mathcal{B}$. As mentioned in the Lemma \ref{product_lemma}, the difference in number of coordinates where $(x_i,y_i)=(0,1)$ and $(x_i,y_i)=(1,0)$ is at most one. Equivalently, the difference in number of bicliques in $\mathcal{B}$ that has $\{x \in U_i, y \in W_i\}$ and $\{x \in W_i, y \in U_i\}$ is at most one. Moreover, by Lemma \ref{product_lemma}, the number of coordinates in which $x$ and $y$ such that $\{x_i,y_i\}=\{0,1\}$ is at least $1$ and at most $2\lfloor k/2 \rfloor + 2$. This translates to $t$ being at least one and at most $2\lfloor k/2 \rfloor + 2$. Therefore, we have an almost balanced ordered covering of order $2\lfloor k/2 \rfloor + 2$ of a complete graph on $\bigg(\frac{1}{\lfloor{k/2 \rfloor}}\bigg)^{\lfloor{k/2 \rfloor}} \cdot \bigg(\frac{d^{\lfloor{k/2 \rfloor}+2}}{\log^2 d}\bigg)$ using $2d$ bicliques. Rescaling, we have 

    \begingroup
    \allowdisplaybreaks
    \begin{align*}
        n &\geq \bigg(\frac{1}{\lfloor{k/2 \rfloor}}\bigg)^{\lfloor{k/2 \rfloor}} \cdot \bigg(\frac{d^{\lfloor{k/2 \rfloor}+2}}{\log^2 d}\bigg)\\
        &\geq \frac{1}{\left(\frac{K}{2}-1\right)^{\frac{K}{2}-1}} \cdot \frac{(D/2)^{\frac{K}{2}+1}}{\log^2(D/2)} \qquad \text{(Simplifying with } K=2\lfloor k/2 \rfloor+2,\ D=2d\text{).}\\
        &\geq \frac{1}{4\cdot K^{\frac{K}{2}-1}} \cdot \frac{D^{\frac{K}{2}+1}}{\log^2 D}.
    \end{align*}
    \endgroup

    This gives the required upper bound for even values of order $K$. Since an almost balanced ordered covering of order $K-1$ is also an almost balanced ordered covering of order $K$, we have $f(n,K) \leq f(n,K-1)$, which provides the required bound for odd values of $K$.
\end{proof}

\bibliographystyle{plain}
\bibliography{refs}

\end{document}